\documentclass{article}

\usepackage{mathptmx}

 
\usepackage{geometry}
\usepackage{amssymb}
\usepackage{latexsym, amsmath, amscd,amsthm}
\usepackage{graphicx}
\usepackage[percent]{overpic}
\usepackage{units}
\usepackage{xcolor}
\definecolor{linkblue}{HTML}{003d73}
\definecolor{linkgreen}{HTML}{006161}
\definecolor{linkred}{HTML}{a11950}
\usepackage[pagebackref]{hyperref}
\hypersetup{
	pdftitle={Knots with Exactly 10 Sticks},
	pdfauthor={Ryan Blair, Thomas D. Eddy, Nathaniel Morrison, and Clayton Shonkwiler},
	pdfsubject={knot theory},
	pdfkeywords={knots, random polygons, stick number, superbridge index, bridge index},
	colorlinks=true,
	linkcolor=linkblue,
	citecolor=linkgreen,
	urlcolor=linkred
}
\PassOptionsToPackage{caption=false,labelformat=empty}{subfig}
\usepackage[lofdepth]{subfig}
\usepackage[export]{adjustbox}
\usepackage{algorithm}
\usepackage{algorithmicx}
\usepackage{algpseudocode}
\usepackage{booktabs}
\usepackage{authblk}
\usepackage{cite}
\usepackage[nameinlink]{cleveref}

\newtheorem{theorem}{Theorem}

\newtheorem{proposition}[theorem]{Proposition}

\theoremstyle{definition}

\newcommand{\stick}{\operatorname{stick}}

\newcommand{\bridge}{\operatorname{b}}
\newcommand{\superbridge}{\operatorname{sb}}

\bibliographystyle{plain}

\setlength{\parskip}{3pt}

\title{Knots with Exactly 10 Sticks}
\author[$\ast$]{Ryan Blair}
\author[$\dag$]{Thomas D.\ Eddy}
\author[$\ast$]{Nathaniel Morrison}
\author[$\dag$]{Clayton Shonkwiler}
\affil[$\ast$]{Department of Mathematics, California State University, Long Beach, CA} 
\affil[$\dag$]{Department of Mathematics, Colorado State University, Fort Collins, CO}
\date{}

\begin{document}

\maketitle

\begin{abstract}
	We prove that the knots $13n_{592}$ and $15n_{41,127}$ both have stick number 10. These are the first non-torus prime knots with more than 9 crossings for which the exact stick number is known.
	
\end{abstract}

The \emph{stick number} $\stick(K)$ of a knot $K$ is the minimum number of segments needed to construct a polygonal version of $K$. The stick number was first defined by Randell~\cite{Randell:1994bx} more than two decades ago and is a prototypical \emph{geometric} knot invariant: it shares important qualitative features with other geometric invariants like crossing number, rope length, and minimum M\"obius energy. Since polygonal knots provide a simple model for ring polymers like bacterial DNA (see the survey~\cite{Orlandini:2007kn}), stick number gives an indication of the minimum size of a polymer knot.

As with many elementary invariants, the stick number is quite difficult to compute. It is known for only 35 of the 249 nontrivial knots up to 10 crossings and, prior to our work, the only knots with more than 9 crossings for which it was known are certain torus and composite knots~\cite{Adams:1997gb,Jin:1997da,Bennett:2008th,Adams:2009kp,Johnson:2013ko}. See~\cite{TomClay} for a summary of the key results in this area, as well as a table giving the best known bounds on stick number for all knots up to 10 crossings.

\begin{figure}[htbp]
	\centering
		\subfloat[$13n_{592}$]{\includegraphics[scale=.3,valign=c]{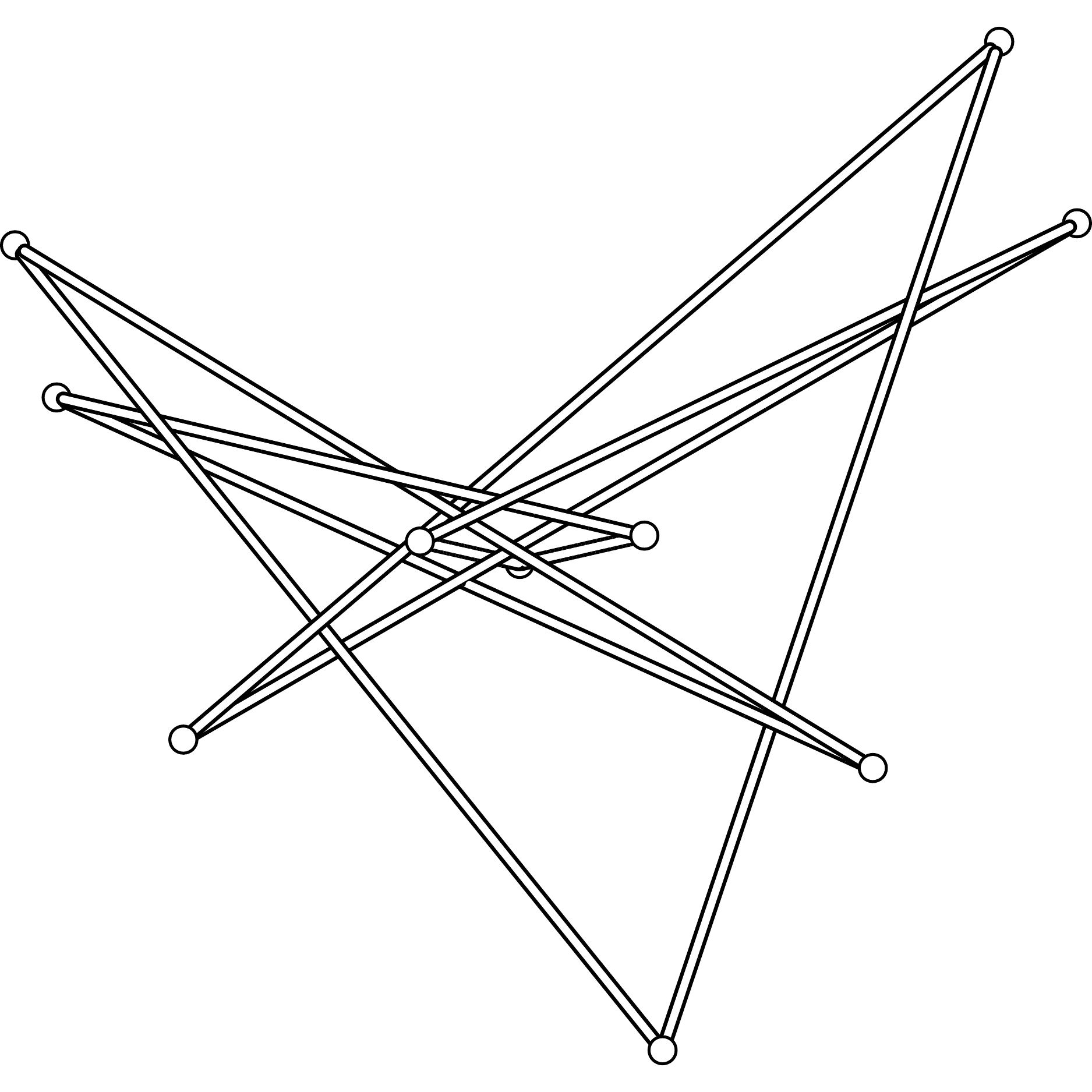}}\qquad
		\subfloat[$15n_{41,127}$]{\includegraphics[scale=.3,valign=c]{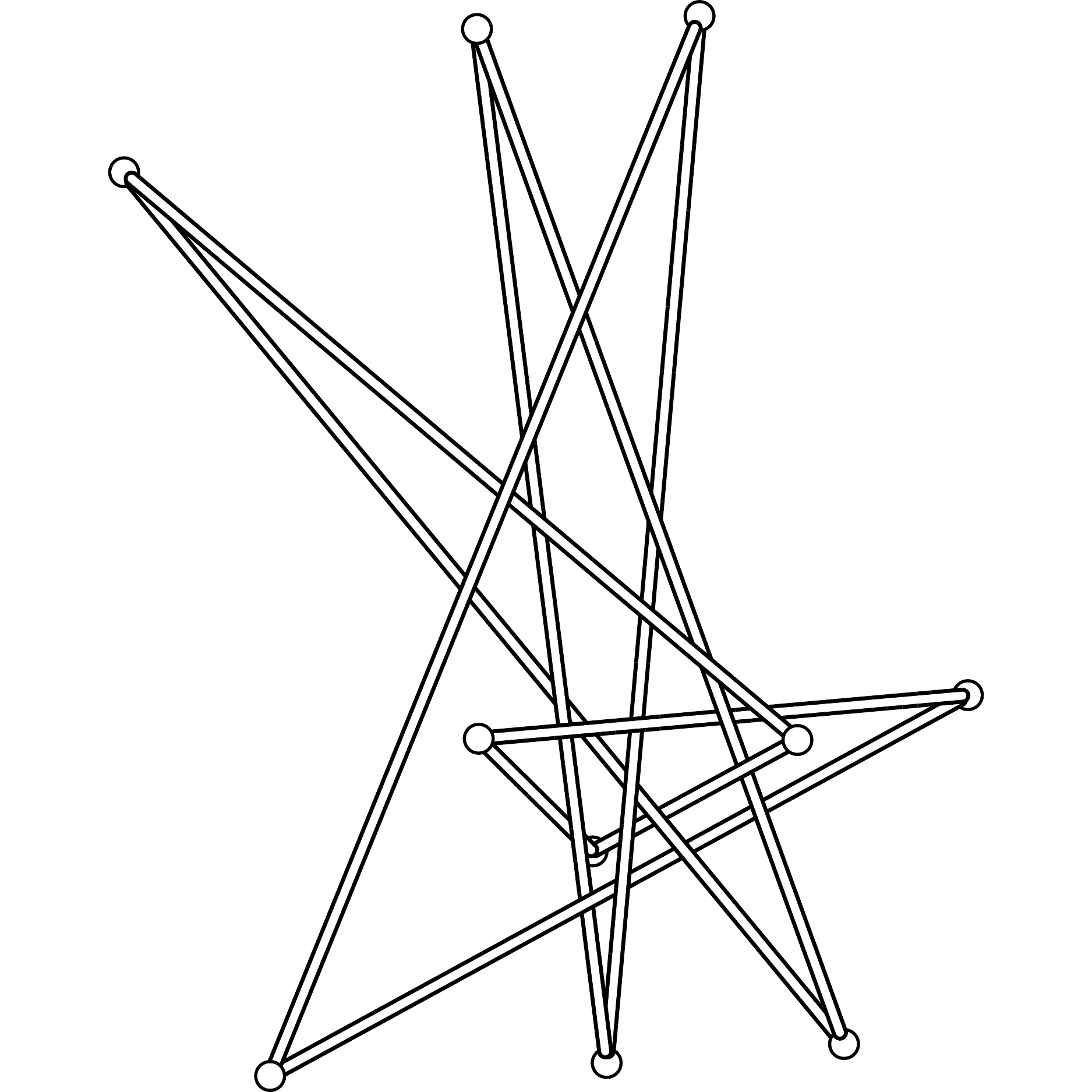}}
	\caption{Equilateral 10-stick examples of $13n_{592}$ and $15n_{41,127}$ shown in orthographic perspective. The $13n_{592}$ is viewed from the direction $(-24,22,3)$ and $15n_{41,127}$ is viewed from the direction $(-10,1,1)$.}
	\label{fig:stick figures}
\end{figure}

The primary goal of this paper is to determine the exact stick numbers of two knots:

\begin{theorem}\label{thm:main}
	The knots $13n_{592}$ and $15n_{41,127}$ have bridge index 4, superbridge index 5, and stick number 10.
\end{theorem}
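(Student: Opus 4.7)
The plan is to establish all three equalities by combining explicit $10$-stick constructions (for the upper bounds) with classical relations between bridge index, superbridge index, and stick number. Let $K$ denote either $13n_{592}$ or $15n_{41,127}$.

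\textbf{Upper bounds from the polygons.} The equilateral $10$-gons depicted in Figure~\ref{fig:stick figures} are the essential input; modulo verifying that they actually represent $K$ (which is accomplished by extracting the PD-code of a generic projection and computing a discriminating invariant such as the HOMFLY-PT polynomial), they give $\stick(K)\le 10$ at once. The elementary observation that any closed $n$-stick polygon has at most $\lfloor n/2\rfloor$ local maxima in a generic direction then yields $\superbridge(K)\le 5$. Choosing a favourable height direction (possibly after a small smoothing) exhibits a representative with at most four local maxima, giving $\bridge(K)\le 4$; alternatively this upper bound is tabulated in knot databases such as KnotInfo.

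\textbf{Bridge index lower bound.} The only genuinely non-polygonal input is $\bridge(K)\ge 4$. This value is available from KnotInfo for both knots and can be certified by a variety of standard techniques (meridional rank of the knot group, analysis of the $2$-fold branched cover, or other bridge-number obstructions derived from the knot's polynomial invariants).

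\textbf{Superbridge and stick lower bounds.} A classical result of Kuiper, from the paper introducing the invariant, asserts that for every nontrivial knot $\superbridge(K)>\bridge(K)$. Combined with $\bridge(K)\ge 4$ this forces $\superbridge(K)\ge 5$, and hence $\superbridge(K)=5$. Applying the local-maxima count to polygonal representatives now gives $\stick(K)\ge 2\,\superbridge(K)=10$, matching the upper bound and closing the argument.

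\textbf{Main obstacle.} The decisive step in practice is producing the polygons of Figure~\ref{fig:stick figures}. At $13$ and $15$ crossings these knot types are realised by random equilateral $10$-gons only with extremely small probability, so an extensive computational search, paired with a reliable knot-identification pipeline that can distinguish $13n_{592}$ and $15n_{41,127}$ from their many near-cousins, is required. Once the explicit conformations and the value $\bridge(K)=4$ are in hand, the remaining inequalities are essentially classical.
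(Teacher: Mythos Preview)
Your overall architecture matches the paper's exactly: explicit $10$-stick realisations give the upper bounds, Kuiper's strict inequality $\bridge(K)<\superbridge(K)$ together with Randell's bound $\superbridge(K)\le\tfrac12\stick(K)$ reduces everything to $\bridge(K)\ge 4$, and then the three equalities drop out. So the logical skeleton is fine.

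The gap is in the step you label ``Bridge index lower bound.'' You treat $\bridge(K)\ge 4$ as something one can simply look up or obtain by unspecified ``standard techniques,'' but this is precisely the content of the theorem that is \emph{not} routine. For $15n_{41,127}$ in particular, no tabulated bridge index exists: KnotInfo does not carry $15$-crossing knots, and the alternatives you list (two-fold branched cover, polynomial invariants) do not in general give sharp bridge bounds at this level. The paper's actual argument is to exhibit an explicit surjective homomorphism $\pi_1(S^3\setminus K)\twoheadrightarrow S_5$ sending meridians to transpositions, built by hand from a Wirtinger presentation; by \Cref{prop:homomorphism} this forces $\bridge(K)\ge 4$. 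For $15n_{41,127}$ the authors give a specific transposition labeling of the strands (seeded with $(1\,2),(1\,3),(1\,4),(2\,5)$ and propagated through the Wirtinger relations), and for $13n_{592}$ they observe that a single crossing change relates it to $15n_{41,127}$ in a way that allows the same labeling to be recycled. Without supplying such a certificate, your proposal does not actually prove the lower bound; it only asserts that one exists.

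A minor point: you don't need a separate argument for $\bridge(K)\le 4$. Once $\stick(K)\le 10$ is in hand, the chain $\bridge(K)<\superbridge(K)\le 5$ already gives $\bridge(K)\le 4$ by integrality, so ``choosing a favourable height direction'' is unnecessary.
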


The basic strategy is to show that 10 is both an upper and a lower bound on stick number for these knots. To see that 10 is an upper bound, it suffices to find 10-stick representatives of each knot, which are pictured in \Cref{fig:stick figures}. Two of us (Eddy and Shonkwiler) discovered these examples while generating very large ensembles of 10-stick random knots in tight confinement for our paper~\cite{TomClay}, in which we searched for new bounds on stick number by generating random polygonal knots confined to small spheres. The point of sampling random knots in confinement is to boost the probability of generating complicated knots. Using symplectic geometry, it turns out that sampling polygonal knots in confinement is equivalent to sampling points in certain convex polytopes according to Lebesgue measure~\cite{Cantarella:2016iy}; see~\cite[Section~3]{TomClay} for a self-contained description of this approach, and see the {\tt stick-knot-gen} project~\cite{stick-knot-gen} for code and supporting data. 

On the other hand, as suggested by the inclusion of statements about these other invariants in \Cref{thm:main}, we will show that 10 is a lower bound on stick number using inequalities relating stick number with bridge index and superbridge index, whose definitions we now recall. Given a knot type $K$, let $\gamma:S^1\rightarrow \mathbb{R}^3$ be a smooth embedding in the ambient isotopy class of $K$. Given a linear function $z:\mathbb{R}^3\rightarrow \mathbb{R}$ of norm one so that $z\circ \gamma$ is Morse, let $b_z(\gamma)$ be the number of local maxima of $z\circ \gamma$. Then the \emph{bridge number} of $\gamma$ is $\bridge(\gamma):=min_{z} b_z(\gamma)$ where the minimum is taken over all $z$ such that $z\circ \gamma$ is Morse. Similarly, the \emph{superbridge number} of $\gamma$ is $\superbridge(\gamma):=max_{z} b_z(\gamma)$. 

We elevate these quantities to knot invariants by minimizing over all $\gamma$ in the ambient isotopy class of $K$. More precisely, the \emph{bridge index} of a knot $K$ is $\bridge(K):=min_{\gamma}\bridge(\gamma)$ and the \emph{superbridge index} of $K$ is ${\superbridge(K):=min_{\gamma}\superbridge(\gamma)}$. The following two propositions relate bridge index, superbridge index, and stick number.

\begin{proposition}[Kuiper~\cite{Kuiper:1987ki}]\label{prop:bridge and superbridge}
	For any nontrivial knot $K$, $\bridge(K) < \superbridge(K)$.
\end{proposition}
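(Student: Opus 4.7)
The plan is a proof by contradiction. Suppose $K$ is a nontrivial knot with $\bridge(K) = \superbridge(K) = k$. By the definition of superbridge index there is a smooth embedding $\gamma$ of $K$ achieving $\superbridge(\gamma) = k$, and for every Morse direction $z \in S^2$,
$$k = \bridge(K) \le \bridge(\gamma) \le b_z(\gamma) \le \superbridge(\gamma) = k,$$
so $b_z(\gamma) \equiv k$ on the open dense set of Morse directions; equivalently, $z \circ \gamma$ has exactly $2k$ critical points for every such $z$.

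Next, I would translate this constancy into a rigid condition on the tangent indicatrix $T(t) := \gamma'(t)/|\gamma'(t)|$, viewed as a closed curve on $S^2$. The critical points of $z \circ \gamma$ are precisely the intersections of $T$ with the great circle $z^\perp$, so the hypothesis says that $T$ meets every generic great circle transversally in exactly $2k$ points. Crofton's formula on $S^2$ then pins down the total absolute curvature of $\gamma$:
$$\int_\gamma |\kappa|\, ds \;=\; \operatorname{length}(T) \;=\; \tfrac{1}{4}\int_{S^2} 2k\, dA(z) \;=\; 2\pi k.$$

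The final step is to derive a contradiction from the existence of an embedded representative of a nontrivial knot class whose total absolute curvature equals $2\pi\, \bridge(K)$ exactly. When $\bridge(K) = 2$, the F\'ary--Milnor theorem delivers this at once: a closed embedded curve of total absolute curvature at most $4\pi$ is unknotted. For $\bridge(K) \ge 3$, I would invoke the sharp form of Milnor's total curvature theorem---the bound $\int |\kappa|\, ds \ge 2\pi\, \bridge(K)$ is an infimum, never realized by an actual smooth embedding in a nontrivial class.

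The main obstacle is precisely this non-attainability clause when $\bridge(K) \ge 3$. A self-contained completion has to show that if the tangent indicatrix of an embedded closed space curve meets every generic great circle in the same number $2k \ge 4$ of transverse points, then $\gamma$ is forced into a degenerate position (such as planar and convex, or multiply covered) that is incompatible with realizing a nontrivial knot. This ``constant order'' rigidity for spherical curves is the delicate ingredient that makes $\bridge(K) < \superbridge(K)$ nontrivial beyond the easy case $\bridge(K) = 2$.
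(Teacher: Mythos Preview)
The paper does not prove this proposition; it is stated with attribution to Kuiper and used as a black box in the proof of Theorem~\ref{thm:main}. So there is no in-paper argument to compare your attempt against.

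That said, your outline is the correct setup and is essentially how Kuiper's argument begins: assume $\bridge(K)=\superbridge(K)=k$, choose $\gamma$ realizing $\superbridge(\gamma)=k$, and deduce $b_z(\gamma)\equiv k$ on the dense set of Morse directions, hence (via Crofton) total absolute curvature exactly $2\pi k$. The F\'ary--Milnor step for $k=2$ is fine. Where the proposal breaks down is the appeal, for $\bridge(K)\ge 3$, to a ``sharp form of Milnor's theorem'' asserting that the infimum $2\pi\,\bridge(K)$ is never attained by a smooth embedding. Milnor's 1950 paper establishes strict non-attainability only in the case $\bridge(K)=2$; the general statement you want is not in Milnor and is in fact \emph{equivalent} to the very inequality $\bridge(K)<\superbridge(K)$ you are trying to prove (each implies the other by the Crofton computation you already wrote down). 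So that step is circular. You correctly flag the ``constant-order rigidity'' of the tangent indicatrix as the real obstacle, but you do not supply it, and the Crofton reformulation relocates the difficulty rather than dissolving it. Filling that gap is precisely the content of Kuiper's paper.
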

\begin{proposition}[Randell~\cite{Randell:1998td}]\label{prop:superbridge and stick}
	For any knot $K$, $\superbridge(K) \leq \frac{1}{2}\stick(K)$.
\end{proposition}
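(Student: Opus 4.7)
The plan is to prove the inequality first for a polygonal representative of $K$ achieving the stick number, and then transfer the resulting bound to a smooth representative in the same isotopy class. First, let $\gamma$ be a polygonal representative of $K$ with $n=\stick(K)$ edges, and let $z$ be any unit vector that is not perpendicular to any edge of $\gamma$. Along each edge the function $z\circ\gamma$ is strictly monotonic, so its local extrema can occur only at vertices. Each of the $n$ vertices is either a local maximum, a local minimum, or a monotone pass-through of $z\circ\gamma$, and since $\gamma$ is closed the numbers of local maxima and local minima coincide; hence at most $n/2$ vertices are local maxima, so $b_z(\gamma)\leq n/2$.

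Next I would smooth $\gamma$ to a $C^{\infty}$ embedded representative $\tilde\gamma$ of $K$ by rounding each corner inside a sufficiently small neighborhood of each vertex, arranging that along each rounded arc the unit tangent stays in the open angular sector spanned by the two incident edge directions. With this control, for every $z$ not perpendicular to any edge the local extrema of $z\circ\tilde\gamma$ are in one-to-one correspondence with those of $z\circ\gamma$, so $b_z(\tilde\gamma)\leq n/2$. The directions perpendicular to some edge form a finite union of great circles on $S^2$, so this bound already holds on an open dense subset of Morse directions for $\tilde\gamma$. For any remaining Morse direction $z$, non-degeneracy is an open condition and Morse critical points vary continuously with their indices under perturbation, so $b_z$ is locally constant on the Morse locus; pushing $z$ slightly off the great-circle arrangement preserves its value. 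Thus $b_z(\tilde\gamma)\leq n/2$ for every Morse direction $z$, and $\superbridge(K)\leq \superbridge(\tilde\gamma)\leq \tfrac{1}{2}\stick(K)$.

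The main obstacle I expect is the bijection claim in the smoothing step: one must round corners so that, uniformly in generic $z$, a monotone pass-through vertex does not acquire an extremum from its smoothing and an extremal vertex contributes exactly one extremum of $z\circ\tilde\gamma$. The angular-sector condition on the tangent is precisely what enforces this, but making it rigorous requires choosing the corner radius smaller than half the minimum edge length of $\gamma$ and exploiting monotonicity of $z\circ\tilde\gamma$ within each sector. Once that local analysis is in hand, the remainder of the argument is a packaging of the elementary vertex count with standard stability of Morse critical points.
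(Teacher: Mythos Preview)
The paper does not supply its own proof of this proposition; it is quoted as a result of Randell and used as a black box. Your argument is the standard one behind Randell's inequality and is essentially correct: count local maxima of a generic height function on a minimal stick realization, then pass to a smooth representative in the same isotopy class.

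One small tightening is needed in your smoothing step. At an extremal vertex the incoming and outgoing edge directions lie on opposite sides of the hyperplane $z^{\perp}$, so the open angular sector between them meets $z^{\perp}$. Your stated condition, that the unit tangent of $\tilde\gamma$ merely \emph{stay in} that sector, allows the tangent to wander back and forth across $z^{\perp}$ and thus could create several critical points where the polygon had one. What you actually want is that the tangent moves \emph{monotonically} from one edge direction to the other, equivalently that each corner is rounded by a strictly convex planar arc (for instance a short circular arc in the plane spanned by the two edges). With that in place, a pass-through vertex contributes no critical point (the sector lies entirely on one side of $z^{\perp}$, so $z\circ\tilde\gamma$ is strictly monotone there), and an extremal vertex contributes exactly one, giving the claimed bijection for every $z$ not orthogonal to an edge. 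The density-plus-stability argument for the remaining Morse directions is fine as written.
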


Since $\bridge(K) < \superbridge(K)\leq \frac{1}{2}\stick(K)$ and all three of these invariants are integers, to show a knot has stick number at least $2n$ it suffices to show that the knot has bridge index at least $n - 1$.\footnote{In general this lower bound on stick number is far from sharp: for example, there are infinitely many 2-bridge knots, but only finitely many knots with stick number $\leq n$ for any $n$~\cite{Negami:1991gb,Calvo:2001gv}, so the gap between bridge index and stick number can be arbitrarily large.} Finding lower bounds on bridge index can often be challenging. Classically, Fox colorings provide lower bounds on bridge index; recall that a knot is Fox 3-colorable if and only if there exists a surjective homomorphisms from the knot group to $S_3$, the symmetric group on three elements, that sends meridians of the knot to transpositions. A generalization of this method is to find a surjective homomorphism from the knot group to a group with nice properties. Recently, surjective homomorphisms from knot groups to symmetric groups~\cite{Baader:2019vm} and Coxeter groups~\cite{Baader:2019uf} that send meridians to elements of order two have been used to give lower bounds on bridge index. An example of this approach is the following well-known result.

\begin{proposition}\label{prop:homomorphism}
	Let $K$ be a knot and $S_n$ the symmetric group on $n$ elements. If the knot group $\pi_1\!\left(S^3\, \backslash K\right)\!$ admits a surjective homomorphism to $S_n$ such that every meridian is sent to a transposition, then $\bridge(K) \geq n-1$.
\end{proposition}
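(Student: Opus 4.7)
The plan is to leverage the standard fact that a knot with bridge index $b$ admits a presentation of its knot group with exactly $b$ meridional generators, and then to use a combinatorial lower bound on the number of transpositions needed to generate $S_n$.

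First I would recall that if $K$ has a $b$-bridge presentation, the Wirtinger-type presentation coming from that bridge diagram (equivalently, the presentation from a handle decomposition of $S^3$ adapted to the bridge sphere) yields a presentation of $\pi_1(S^3 \backslash K)$ on exactly $\bridge(K)$ generators, each of which is (conjugate to) a meridian. Taking such a minimal bridge presentation, write $\pi_1(S^3 \backslash K) = \langle m_1, \dots, m_{\bridge(K)} \mid R \rangle$ where each $m_i$ is a meridian.

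Next, given a surjective homomorphism $\phi : \pi_1(S^3 \backslash K) \to S_n$ sending every meridian to a transposition, the images $\phi(m_1), \dots, \phi(m_{\bridge(K)})$ are transpositions that generate $S_n$, since they are the images of a generating set under a surjection. So the task reduces to showing that $S_n$ cannot be generated by fewer than $n-1$ transpositions.

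The final step is the classical graph-theoretic argument: to a set $T$ of transpositions in $S_n$, associate the graph $G_T$ on vertex set $\{1, \dots, n\}$ with an edge $\{i,j\}$ for each $(i\ j) \in T$. The subgroup $\langle T \rangle \leq S_n$ preserves the partition of $\{1, \dots, n\}$ into connected components of $G_T$, so if $\langle T \rangle = S_n$ then $G_T$ must be connected, which forces $|T| \geq n - 1$. Applying this with $T = \{\phi(m_1), \dots, \phi(m_{\bridge(K)})\}$ gives $\bridge(K) \geq n - 1$.

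The main subtlety to nail down carefully is the first step, namely that a bridge presentation really does produce a presentation on $\bridge(K)$ meridional generators (rather than on more generators with some meridional and some not); once this is set up, the combinatorial lower bound on generating $S_n$ by transpositions is standard and the conclusion is immediate. No routine calculation is required.
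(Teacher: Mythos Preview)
The paper does not actually prove this proposition: it is stated there as ``a well-known result'' without proof, so there is no argument in the paper to compare against. Your proof is the standard one and is correct. The only point worth tightening is exactly the one you flag---that a minimal bridge presentation yields $\bridge(K)$ meridional generators---which follows from the Wirtinger presentation of a $b$-bridge diagram together with Wirtinger relations letting you eliminate all but one generator per overarc; once that is in hand, the graph-theoretic bound on transposition generating sets finishes it as you describe.
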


In ongoing work~\cite{RyanNateSashka}, two of us (Blair and Morrison) together with Alexandra Kjuchukova are building on earlier algorithms giving upper bounds on bridge index~\cite{Blair:2017ud} to develop code that gives matching lower bounds by searching for all homomorphisms to finite Coxeter groups for knots with 16 or fewer crossings. An incomplete version of this code found the homomorphism from the knot group of $15n_{41,127}$ to the symmetric group $S_5$ given below.

More broadly, the stick knot data~\cite{stick-knot-gen} and the forthcoming Coxeter group homomorphism data are each substantial but mostly unexplored, and we believe they will yield further interesting results, both separately and in combination.

\begin{table}
	\begin{center}
	\begin{tabular}{|rrr|}
		\multicolumn{3}{c}{$13n_{592}$} \\
		\hline
	    0& 0& 0 \\
	     \(10,\!000,\!000\)& 0& 0 \\
	     \(1,\!442,\!849\)&\(5,\!174,\!472\)& 0 \\
	     \(8,\!382,\!194\)&\(-1,\!845,\!927\)&\(1,\!599,\!839\) \\
	     \(2,\!602,\!230\)&\(3,\!136,\!071\)&\(-4,\!863,\!265\) \\
	     \(3,\!168,\!808\)&\(5,\!301,\!213\)&\(4,\!883,\!076\) \\
	     \(3,\!990,\!825\)&\(284,\!477\)&\(-3,\!728,\!350\) \\
	     \(4,\!424,\!114\)&\(7,\!009,\!518\)&\(3,\!659,\!890\) \\
	     \(1,\!311,\!109\)&\(-1,\!250,\!555\)&\(-1,\!039,\!088\) \\
	     \(7,\!812,\!682\)&\(6,\!236,\!842\)&\(252,\!592\) \\
		 \hline
	\end{tabular}
	\qquad \qquad
	\begin{tabular}{|rrr|}
		\multicolumn{3}{c}{$15n_{41,127}$} \\
		\hline
	    0 & 0 & 0 \\
	     \(10,\!000,\!000\) & 0 & 0 \\
	     \(388,\!857\) & \(2,\!761,\!507\) & 0 \\
	     \(5,\!630,\!173\) & \(-3,\!679,\!476\) & \(-5,\!571,\!566\) \\
	     \(5,\!516,\!344\) & \(2,\!658,\!376\) & \(2,\!162,\!688\) \\
	     \(1,\!049,\!311\) & \(-120,\!662\) & \(-6,\!341,\!579\) \\
	     \(6,\!132,\!824\) & \(510,\!618\) & \(2,\!246,\!749\) \\
	     \(1,\!408,\!716\) & \(1,\!799,\!173\) & \(-6,\!472,\!335\) \\
	     \(1,\!932,\!613\) & \(-2,\!026,\!605\) & \(2,\!752,\!032\) \\
	     \(9,\!330,\!316\) & \(3,\!365,\!085\) & \(-1,\!273,\!345\) \\
		 \hline
	\end{tabular}
	\end{center}
	\caption{Coordinates of the vertices of the 10-stick $13n_{592}$ and $15n_{41,127}$ given as integers. To get the vertices of a stick knot with unit-length edges, multiply each coordinate by $10^{-7}$. The coordinates can be downloaded from the {\tt stick-knot-gen} project~\cite{stick-knot-gen} either as tab-separated text files or in a SQLite database.}
	\label{tab:coords}
\end{table}

\begin{proof}[Proof of \Cref{thm:main}]
	We give the coordinates of the vertices of 10-stick representatives of both knots in \Cref{tab:coords} and show pictures in \Cref{fig:stick figures}, proving that $\stick(K) \leq 10$ for both of these knots.\footnote{In fact, the edges of these representatives are all (numerically) the same length and it is straightforward to prove the existence of a true equilateral stick knot of the same type using a result of Millett and Rawdon~\cite[Corollary~2]{Millett:2003kl}, so it will follow that the equilateral stick number of these knots is also equal to 10.} Combining this with Propositions~\ref{prop:bridge and superbridge} and~\ref{prop:superbridge and stick}, we see that
	\[
		\bridge(K) < \superbridge(K) \leq \frac{1}{2} \stick(K) \leq 5,
	\]
	for both knots, so the result follows if we can show that $\bridge(K) \geq 4$. 
	
\begin{figure}[htbp]
	\centering
		\includegraphics[scale=.645]{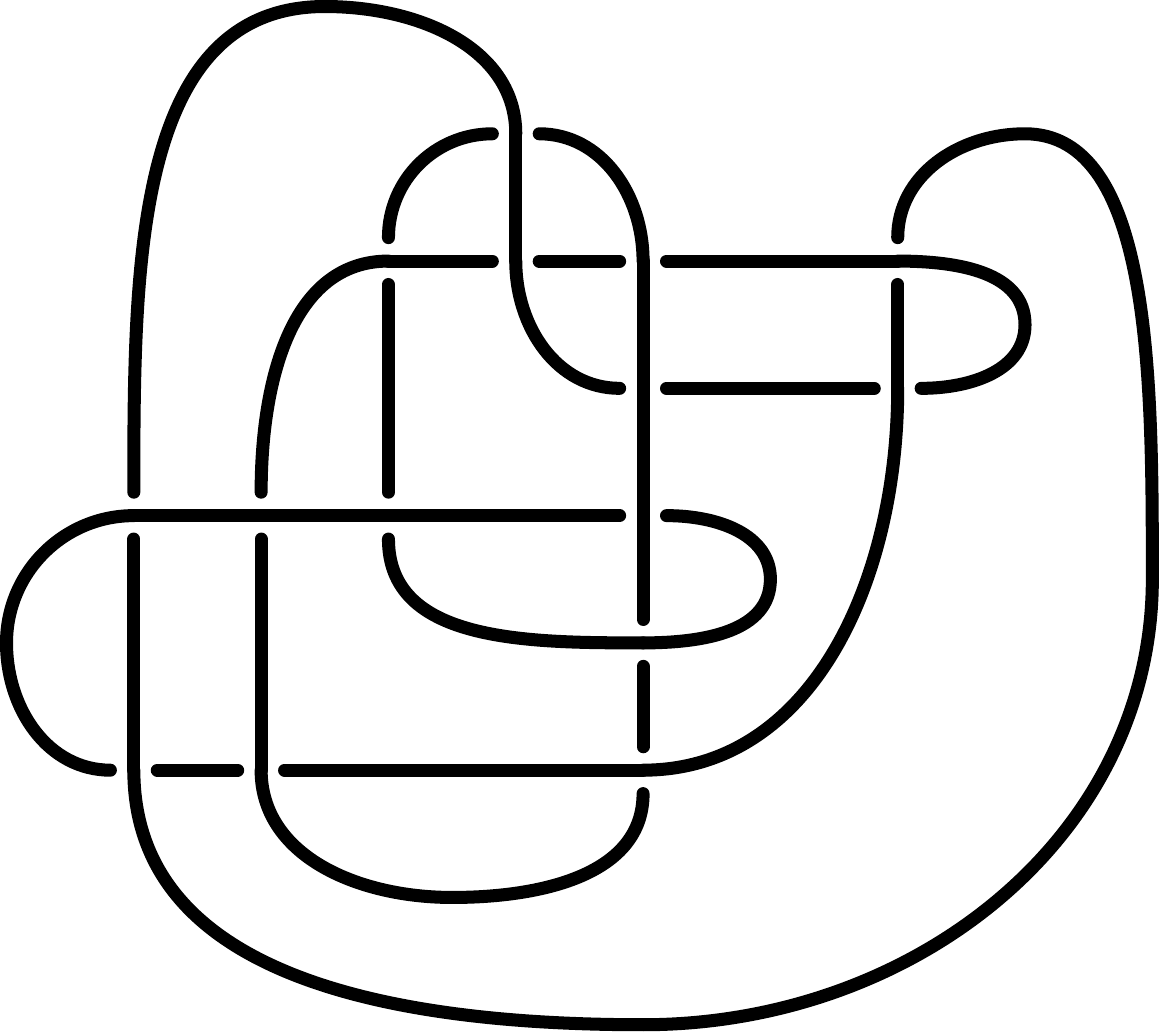}
		\put(-150,99){$1$}
		\put(-103,99){$2$}
		\put(-103,75){$3$}
		\put(-150,146){$4$}
		\put(-128,170){$5$}
		\put(-104,146){$6$}
		\put(-103,122){$7$}
		\put(-103,51){$8$}
		\put(-174,51){$9$}
		\put(-179,99){$10$}
		\put(-131,146){$11$}
		\put(-61,146){$12$}
		\put(-61,122){$13$}
		\put(-203,99){$14$}
		\put(-202,51){$15$}
		\put(-235,75){$\mathbf{(1\,3)}$}
		\put(-202,175){$\mathbf{(2\,5)}$}
		\put(-180,135){$\mathbf{(1\,2)}$}
		\put(-146,66){$\mathbf{(1\,4)}$}
		\put(-57,70){$(1\,2)$}
		\put(-103.5,165){$(3\,4)$}
		\put(-31,144){$(1\,5)$}
		\put(-40,9){$(2\,5)$}
		\put(-188,39){$(1\,3)$}
		\put(-156,165){$(3\,4)$}
		\put(-142.5,119){$(3\,4)$}
		\put(-95,58){$(1\,3)$}
		\put(-146,15){$(2\,3)$}
		\put(-117,134){$(1\,5)$}
		\put(-81,110){$(2\,5)$}
	\caption{The {\tt SnapPy}~\cite{snappy} diagram of $15n_{41,127}$ with crossings labeled $1,2,\dots , 15$ and with strand labels defining a homomorphism $\pi_1\!\left(S^3 \,\backslash 15n_{41,127}\right)\! \twoheadrightarrow S_5$. The generating labels are bolded.}
	\label{fig:k15n41127 Wirtinger}
\end{figure}
	
	To do so, we use the strategy outlined above and seek a surjective homomorphism from each knot group to the symmetric group $S_5$. \Cref{fig:k15n41127 Wirtinger} shows a diagram of $15n_{41,127}$ with strand labels defining a homomorphism to $S_5$. Specifically, we started by labeling the strand $(-10,4,-11)$ with the transposition $(1\,2)$, the strand $(-15,14,10,1,-2)$ with $(1\,3)$, the strand $(-2,3,-1)$ with $(1\,4)$, and $(-7,11,5,-14)$ with $(2\,5)$, then propogated this to a complete labeling of strands via the Wirtinger relations. This labeling satisfies the Wirtinger relations and the transpositions $\{(1\,2),(1\,3),(1\,4),(2\,5)\}$ generate $S_5$, so this defines a surjective homomorphism ${\pi_1\!\left(S^3 \,\backslash 15n_{41,127}\right)\! \twoheadrightarrow S_5}$. Therefore, \Cref{prop:homomorphism} implies that $\bridge(15n_{41,127})\!\geq 4$.
	
\begin{figure}[htbp]
	\centering
		\subfloat[$15n_{41,127}$]{\includegraphics[scale=.5,valign=c]{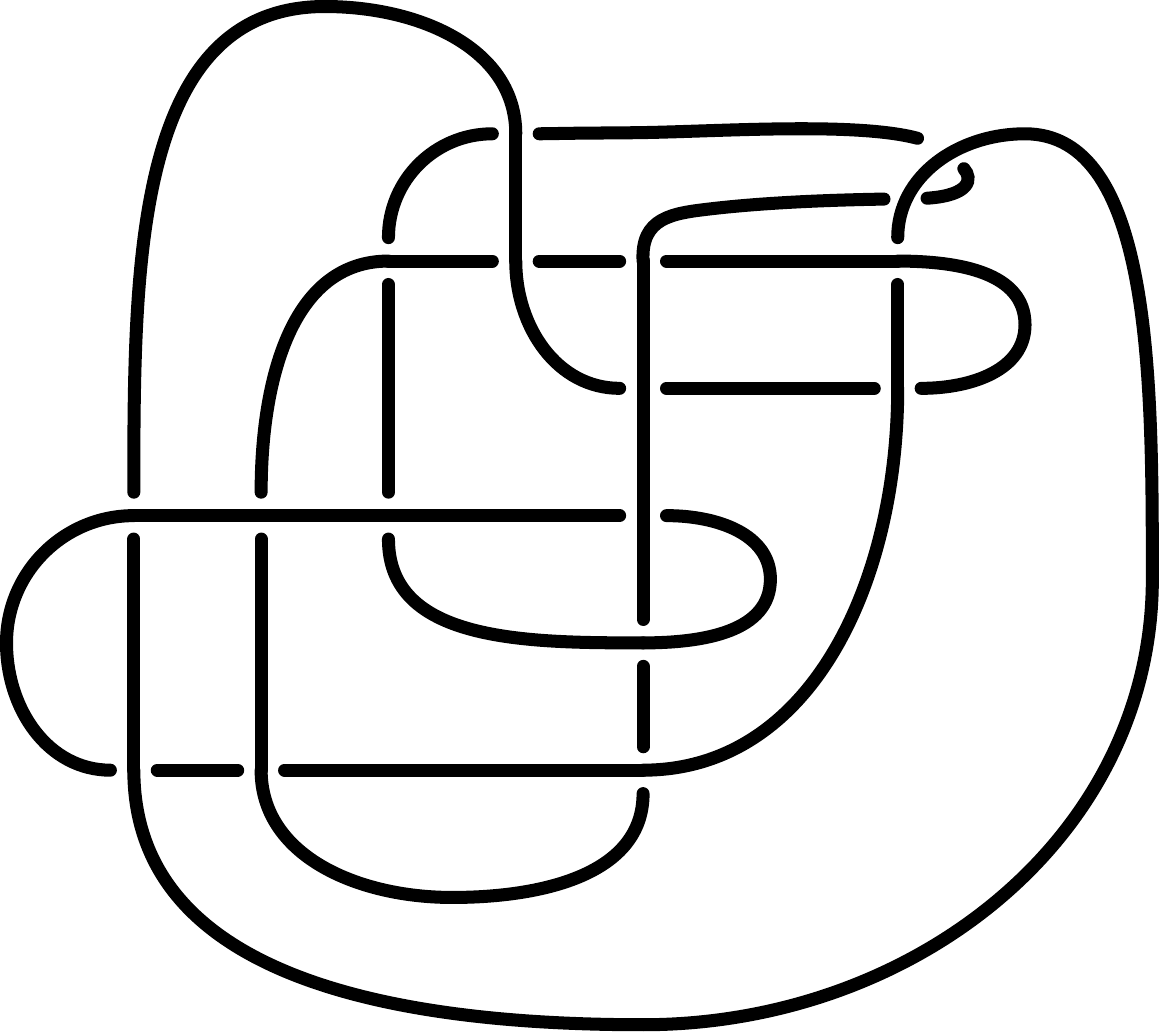}}\qquad  \qquad \qquad
		\subfloat[$\overline{13n_{592}}$]{\includegraphics[scale=.5,valign=c]{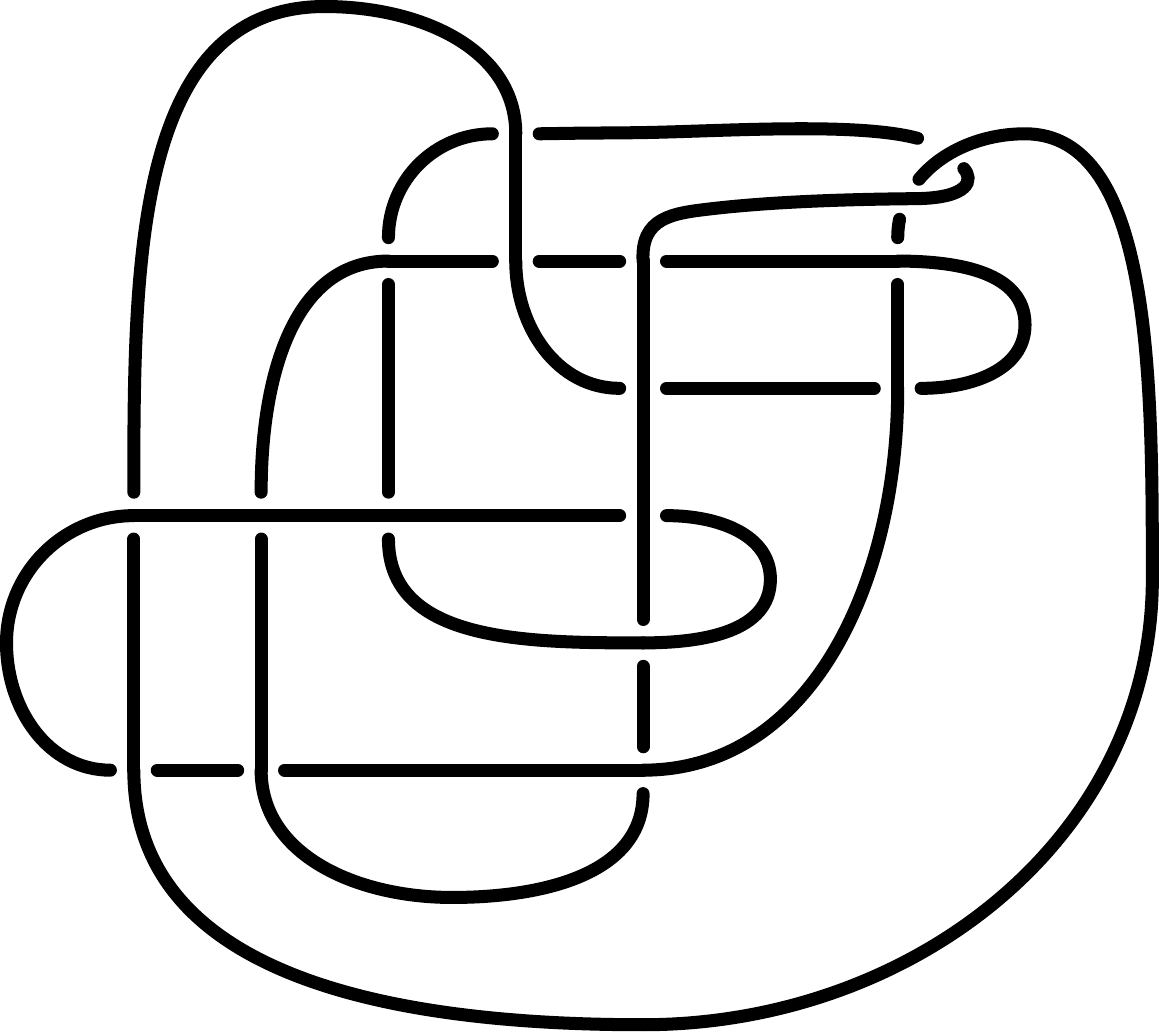}}
	\caption{A non-minimal diagram of $15n_{41,127}$, and a diagram of $\overline{13n_{592}}$ produced by changing a crossing.}
	\label{fig:Reidemeister}
\end{figure}	
	
	\Cref{fig:Reidemeister} shows a non-minimal diagram of $15n_{41,127}$ on the left. This diagram is produced by applying a Reidemeister II move to the diagram in \Cref{fig:k15n41127 Wirtinger}. Changing a single crossing yields a diagram of $\overline{13n_{592}}$, the mirror of $13n_{592}$. Extending the labeling from \Cref{fig:k15n41127 Wirtinger} to this diagram -- which is possible since the transpositions $(3\, 4)$ and $(2\, 5)$ commute -- defines a surjective homomorphism $\pi_1\!\left(S^3 \,\backslash \overline{13n_{592}}\right)\! \twoheadrightarrow S_5$, so $\bridge\!\left(13n_{592}\right)\! = \bridge\!\left(\overline{13n_{592}}\right)\! \geq 4$ as well.
\end{proof}

Somewhat surprisingly, changing $15n_{41,127}$ into $\overline{13n_{592}}$ by switching a single crossing can actually be realized at the level of stick knots as seen in \Cref{fig:mirror}. The non-equilateral 10-stick representation of $\overline{13n_{592}}$ comes from moving the ninth vertex of the $15n_{41,127}$ given in \Cref{tab:coords} to $(3,\!708,\!061, -732,600, 1,\!785,\!942)$ and leaving all other vertices unchanged.

\begin{figure}[t]
	\centering
		\subfloat[$15n_{41,127}$]{\includegraphics[scale=.3,valign=c]{K15n41127rotated.pdf}}\qquad 
		\subfloat[$\overline{13n_{592}}$]{\includegraphics[scale=.3,valign=c]{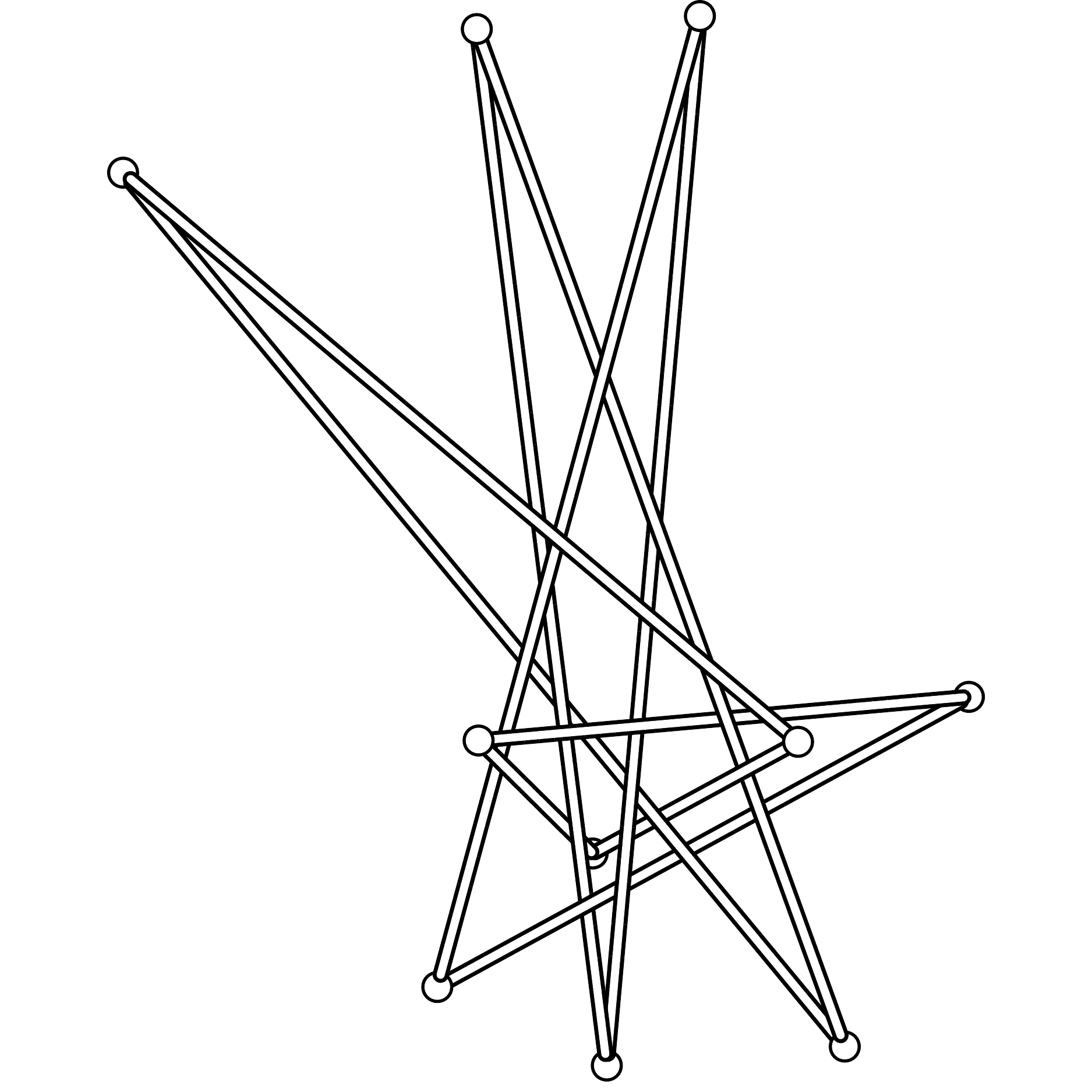}}
	\caption{Moving the ninth vertex of $15n_{41,127}$ produces a non-equilateral 10-stick $\overline{13n_{592}}$.}
	\label{fig:mirror}
\end{figure}

Since some of the labels in \Cref{fig:k15n41127 Wirtinger} don't change at undercrossings, we can switch the signs of any or all of these crossings and produce surjective homomorphisms to $S_5$ from other knot groups. There exist 11-stick representatives of some of these knots, which allows us to compute their bridge index and superbridge index.

\begin{proposition}\label{prop:new superbridge}
	The knots $13n_{285}$, $13n_{293}$, $13n_{587}$, $13n_{607}$, $13n_{611}$, $13n_{835}$, $13n_{1177}$, $13n_{1192}$, and $15n_{41,126}$ all have bridge index 4 and superbridge index 5. In each case the stick number is either 10 or 11.
\end{proposition}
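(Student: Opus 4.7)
The plan is to apply to each of the nine knots the same two-step argument used for $13n_{592}$ and $15n_{41,127}$ in the proof of \Cref{thm:main}: establish $\bridge(K) \geq 4$ via a surjective homomorphism to $S_5$, and establish $\stick(K) \leq 11$ via an explicit polygonal representative.

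First I would identify the \emph{switchable} crossings in \Cref{fig:k15n41127 Wirtinger}, namely those crossings at which the over-strand's transposition label commutes with the under-strand's label (that is, equals it or is disjoint from it). At such a crossing the Wirtinger relation $c = aba^{-1}$ reduces to $c = b$ regardless of the sign of the crossing, so flipping the sign of any subset of switchable crossings yields a new diagram whose inherited strand labeling still satisfies every Wirtinger relation. Since the generating set $\{(1\,2),(1\,3),(1\,4),(2\,5)\}$ survives unchanged among the labels, each resulting diagram defines a surjective homomorphism from the new knot group to $S_5$, and \Cref{prop:homomorphism} gives $\bridge \geq 4$ for the corresponding knot type. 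This is exactly the mechanism that converted $15n_{41,127}$ into $\overline{13n_{592}}$ in \Cref{fig:Reidemeister}.

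Second, I would enumerate subsets of the switchable crossings, simplify the resulting diagrams in {\tt SnapPy}~\cite{snappy}, and use polynomial invariants to match the resulting knot types against the nine target knots listed in the proposition. Third, for each of these nine knots $K$, I would locate an 11-stick polygonal representative in the {\tt stick-knot-gen}~\cite{stick-knot-gen} dataset, giving $\stick(K) \leq 11$. By \Cref{prop:superbridge and stick} this yields $\superbridge(K) \leq \tfrac{11}{2}$, so $\superbridge(K) \leq 5$, and combined with \Cref{prop:bridge and superbridge} and the lower bound from step one we get
\[
4 \leq \bridge(K) < \superbridge(K) \leq \tfrac{1}{2}\stick(K) \leq \tfrac{11}{2},
\]
forcing $\bridge(K) = 4$, $\superbridge(K) = 5$, and hence $\stick(K) \in \{10, 11\}$.

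The main obstacle is the combinatorial bookkeeping in the second step: from $m$ switchable crossings there are up to $2^m$ subsets to flip, the resulting diagrams must be simplified and then distinguished from one another and from the many possible 13- and 15-crossing alternating/non-alternating targets, and one must actually recover each of the nine listed knot types (and only check that these arise, not that they exhaust the possible outputs). A secondary concern is confirming that the stick-knot-gen dataset in fact contains 11-stick samples of all nine knots; because the random-confined sampling of~\cite{TomClay} is engineered to produce topologically complicated examples, this is plausible but must be verified knot-by-knot.
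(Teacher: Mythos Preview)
Your proposal is correct and follows essentially the same approach as the paper: exhibit 11-stick representatives (the paper supplies explicit vertex coordinates in the appendix) and obtain $\bridge(K)\geq 4$ by switching signs at crossings of \Cref{fig:k15n41127 Wirtinger} where the over- and under-strand labels commute, thereby preserving the surjection to $S_5$. The only difference is that the paper carries out the bookkeeping you flag as the main obstacle, specifying for each of the nine knots (or its mirror) exactly which subset of crossings $\{4,5,6,7,14,15\}$ to flip.
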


\begin{proof}
	We give coordinates of the vertices of equilateral 11-stick realizations of each of these knots in \Cref{sec:11-stick coords}, so we have $\bridge(K) < \superbridge(K) \leq \frac{1}{2} \stick(K) \leq \frac{11}{2}$ and the result follows if we can show $\bridge(K) \geq 4$. 
	
	To see this, observe that we can change the sign of crossing 4 in \Cref{fig:k15n41127 Wirtinger} to get a diagram for $13n_{835}$ and associated surjective homomorphism $\pi_1\!\left(S^3 \, \backslash 13n_{835}\right)\! \twoheadrightarrow S_5$, so $\bridge\!\left(13n_{835}\right)\! \geq 4$ by \Cref{prop:homomorphism}. As summarized in \Cref{tab:crossing changes}, other crossing changes produce diagrams and homomorphisms for the rest of the knots in the \nameCref{prop:new superbridge} (or their mirrors) and the result follows.
\end{proof}

\begin{table}[h]
	\begin{center}
		\begin{tabular}{l@{\quad}l}
			Knot & Change crossing(s)\dots \\
			\midrule
			$13n_{285}$ & 5, 14, and 15 \\
			$13n_{293}$ & 4, 6, 7, 14, and 15 \\
			$13n_{587}$ & 4, 5, and 6 \\
			$13n_{607}$ & 4, 5, and 7 \\
			$13n_{611}$ & 4, 14, and 15 \\
			$13n_{835}$ & 4 \\
			$\overline{13n_{1177}}$ & 14 \\
			$\overline{13n_{1192}}$ & 5, 7, and 14 \\
			$15n_{41,126}$ & 14 and 15
		\end{tabular}
	\end{center}
	\caption{Crossing changes in \Cref{fig:k15n41127 Wirtinger} produce several other knots, each of which must have bridge index at least 4.}
	\label{tab:crossing changes}
\end{table}

\subsection*{Acknowledgments} 
\label{sub:acknowledgments}

We are grateful to Jason Cantarella and Alexandra Kjuchukova for ongoing conversations and we thank the KnotInfo project~\cite{knotinfo} for catalyzing the present paper. This work was partially supported by grants from the National Science Foundation (DMS--1821254, RB and NM) and the Simons Foundation (\#354225, CS).

\appendix


\section{Coordinates of the vertices of 11-stick knots} 
\label{sec:11-stick coords}

We give coordinates of equilateral 11-stick representations of each of the knots mentioned in \Cref{prop:new superbridge}. These coordinates can be downloaded from the {\tt stick-knot-gen} project~\cite{stick-knot-gen} either as tab-separated text files or in a SQLite database.

\small
\begin{center}
	\begin{tabular}{|rrr|}
		\multicolumn{3}{c}{$13n_{285}$} \\
		\hline
		$0$ & $0$ & $0$ \\
		$10,\!000,\!000$ & $0$ & $0$ \\
		$859,\!828$ & $4,\!056,\!755$ & $0$ \\
		$3,\!341,\!572$ & $-4,\!982,\!897$ & $3,\!482,\!189$ \\
		$6,\!315,\!660$ & $167,\!473$ & $-4,\!556,\!996$ \\
		$1,\!071,\!184$ & $2,\!581,\!206$ & $3,\!608,\!135$ \\
		$3,\!078,\!763$ & $-4,\!845,\!959$ & $-2,\!779,\!888$ \\
		$5,\!080,\!293$ & $3,\!832,\!695$ & $1,\!767,\!071$ \\
		$-1,\!760,\!951$ & $-3,\!454,\!009$ & $2,\!085,\!376$ \\
		$5,\!721,\!544$ & $2,\!372,\!114$ & $-1,\!087,\!723$ \\
		$6,\!595,\!292$ & $-4,\!997,\!566$ & $5,\!614,\!843$ \\
		\hline
	\end{tabular}
	\qquad\qquad
	\begin{tabular}{|rrr|}
		\multicolumn{3}{c}{$13n_{293}$} \\
		\hline
		$0$ & $0$ & $0$ \\
		$10,\!000,\!000$ & $0$ & $0$ \\
		$702,\!140$ & $3,\!681,\!005$ & $0$ \\
		$5,\!902,\!391$ & $-4,\!422,\!027$ & $2,\!701,\!530$ \\
		$4,\!828,\!936$ & $5,\!481,\!886$ & $1,\!829,\!635$ \\
		$2,\!515,\!302$ & $-3,\!074,\!450$ & $-2,\!800,\!289$ \\
		$6,\!012,\!626$ & $-299,\!876$ & $6,\!147,\!920$ \\
		$7,\!106,\!044$ & $-248,\!388$ & $-3,\!791,\!988$ \\
		$666,\!709$ & $2,\!731,\!194$ & $3,\!254,\!788$ \\
		$5,\!345,\!394$ & $-2,\!852,\!898$ & $-3,\!595,\!600$ \\
		$8,\!348,\!109$ & $5,\!360,\!542$ & $1,\!254,\!462$ \\
		\hline
	\end{tabular}
	
	\begin{tabular}{|rrr|}
		\multicolumn{3}{c}{$13n_{587}$} \\
		\hline
		$0$ & $0$ & $0$ \\
		$10,\!000,\!000$ & $0$ & $0$ \\
		$1,\!117,\!558$ & $4,\!593,\!716$ & $0$ \\
		$8,\!904,\!479$ & $-1,\!236,\!400$ & $2,\!318,\!106$ \\
		$5,\!218,\!343$ & $3,\!791,\!336$ & $-5,\!500,\!735$ \\
		$5,\!234,\!842$ & $3,\!507,\!047$ & $4,\!495,\!209$ \\
		$7,\!948,\!983$ & $-4,\!586,\!685$ & $-712,\!951$ \\
		$5,\!383,\!084$ & $5,\!067,\!290$ & $-1,\!178,\!693$ \\
		$4,\!212,\!386$ & $-3,\!577,\!785$ & $3,\!709,\!265$ \\
		$2,\!513,\!093$ & $100,\!976$ & $-5,\!432,\!897$ \\
		$8,\!316,\!823$ & $5,\!514,\!226$ & $650,\!970$ \\
		\hline
	\end{tabular}
	\qquad\qquad
	\begin{tabular}{|rrr|}
		\multicolumn{3}{c}{$13n_{607}$} \\
		\hline
		$0$ & $0$ & $0$ \\
		$10,\!000,\!000$ & $0$ & $0$ \\
		$986,\!011$ & $4,\!329,\!896$ & $0$ \\
		$8,\!208,\!034$ & $-2,\!392,\!018$ & $1,\!630,\!418$ \\
		$2,\!621,\!993$ & $5,\!354,\!852$ & $-1,\!333,\!051$ \\
		$-1,\!613,\!778$ & $841,\!929$ & $6,\!521,\!360$ \\
		$-781,\!841$ & $2,\!012,\!324$ & $-3,\!375,\!006$ \\
		$6,\!224,\!891$ & $1,\!973,\!665$ & $3,\!759,\!713$ \\
		$-3,\!034,\!534$ & $5,\!688,\!727$ & $3,\!080,\!474$ \\
		$3,\!594,\!430$ & $-618,\!875$ & $-953,\!258$ \\
		$3,\!590,\!235$ & $9,\!333,\!254$ & $24,\!042$ \\
		\hline
	\end{tabular}

	\begin{tabular}{|rrr|}
		\multicolumn{3}{c}{$13n_{611}$} \\
		\hline
		$0$ & $0$ & $0$ \\
		$10,\!000,\!000$ & $0$ & $0$ \\
		$1,\!499,\!082$ & $5,\!266,\!345$ & $0$ \\
		$2,\!621,\!286$ & $-4,\!670,\!135$ & $83,\!746$ \\
		$9,\!409,\!241$ & $2,\!634,\!889$ & $-664,\!786$ \\
		$187,\!148$ & $-871,\!197$ & $966,\!280$ \\
		$8,\!269,\!950$ & $4,\!801,\!218$ & $-612,\!336$ \\
		$2,\!642,\!458$ & $-3,\!367,\!232$ & $655,\!639$ \\
		$4,\!915,\!758$ & $6,\!263,\!837$ & $2,\!095,\!993$ \\
		$806,\!026$ & $-2,\!079,\!111$ & $-1,\!578,\!968$ \\
		$8,\!522,\!161$ & $-2,\!123,\!012$ & $4,\!781,\!798$ \\
		\hline
	\end{tabular}
	\qquad\qquad
	\begin{tabular}{|rrr|}
		\multicolumn{3}{c}{$13n_{835}$} \\
		\hline
		$0$ & $0$ & $0$ \\
		$10,\!000,\!000$ & $0$ & $0$ \\
		$897,\!936$ & $4,\!141,\!549$ & $0$ \\
		$6,\!400,\!763$ & $-2,\!074,\!978$ & $5,\!574,\!377$ \\
		$6,\!030,\!097$ & $512,\!087$ & $-4,\!078,\!068$ \\
		$257,\!946$ & $5,\!028,\!434$ & $2,\!725,\!231$ \\
		$501,\!111$ & $-3,\!773,\!276$ & $-2,\!015,\!104$ \\
		$3,\!966,\!743$ & $5,\!006,\!086$ & $1,\!288,\!257$ \\
		$2,\!987,\!114$ & $-4,\!685,\!303$ & $3,\!550,\!403$ \\
		$1,\!478,\!845$ & $3,\!843,\!801$ & $-1,\!447,\!548$ \\
		$8,\!865,\!166$ & $785,\!407$ & $4,\!559,\!821$ \\
		\hline
	\end{tabular}

	\begin{tabular}{|rrr|}
		\multicolumn{3}{c}{$13n_{1177}$} \\
		\hline
		$0$ & $0$ & $0$ \\
		$10,\!000,\!000$ & $0$ & $0$ \\
		$4,\!452,\!828$ & $8,\!320,\!390$ & $0$ \\
		$-263,\!502$ & $-219,\!251$ & $2,\!197,\!901$ \\
		$6,\!301,\!576$ & $2,\!832,\!197$ & $-4,\!700,\!535$ \\
		$685,\!003$ & $5,\!395,\!073$ & $3,\!166,\!216$ \\
		$5,\!870,\!985$ & $-2,\!081,\!859$ & $-981,\!201$ \\
		$4,\!018,\!052$ & $7,\!077,\!471$ & $-4,\!541,\!159$ \\
		$3,\!945,\!828$ & $-441,\!769$ & $2,\!050,\!944$ \\
		$2,\!482,\!141$ & $5,\!602,\!190$ & $-5,\!780,\!288$ \\
		$8,\!891,\!711$ & $4,\!219,\!576$ & $1,\!769,\!930$ \\
		\hline
	\end{tabular}
	\qquad\qquad
	\begin{tabular}{|rrr|}
		\multicolumn{3}{c}{$13n_{1192}$} \\
		\hline
		$0$ & $0$ & $0$ \\
		$10,\!000,\!000$ & $0$ & $0$ \\
		$2,\!894,\!741$ & $7,\!036,\!710$ & $0$ \\
		$5,\!839,\!468$ & $-999,\!634$ & $-5,\!171,\!631$ \\
		$3,\!390,\!487$ & $-2,\!857,\!690$ & $4,\!344,\!152$ \\
		$6,\!766,\!649$ & $3,\!572,\!004$ & $-2,\!530,\!479$ \\
		$3,\!148,\!686$ & $-5,\!725,\!554$ & $-1,\!848,\!025$ \\
		$7,\!784,\!297$ & $2,\!619,\!976$ & $1,\!129,\!091$ \\
		$2,\!939,\!835$ & $1,\!169,\!775$ & $-7,\!498,\!081$ \\
		$7,\!317,\!510$ & $-555,\!513$ & $1,\!325,\!713$ \\
		$4,\!748,\!190$ & $8,\!652,\!618$ & $-1,\!608,\!380$ \\
		\hline
	\end{tabular}

	\begin{tabular}{|rrr|}
		\multicolumn{3}{c}{$15n_{41,126}$} \\
		\hline
		$0$ & $0$ & $0$ \\
		$10,\!000,\!000$ & $0$ & $0$ \\
		$2,\!780,\!458$ & $6,\!919,\!409$ & $0$ \\
		$7,\!526,\!255$ & $-1,\!676,\!113$ & $1,\!895,\!896$ \\
		$-912,\!130$ & $3,\!222,\!643$ & $-294,\!044$ \\
		$8,\!983,\!359$ & $1,\!841,\!657$ & $-708,\!974$ \\
		$956,\!597$ & $-660,\!225$ & $4,\!705,\!052$ \\
		$5,\!336,\!611$ & $295,\!308$ & $-4,\!233,\!764$ \\
		$6,\!738,\!903$ & $3,\!591,\!613$ & $5,\!102,\!613$ \\
		$4,\!279,\!082$ & $-2,\!033,\!573$ & $-2,\!790,\!837$ \\
		$8,\!155,\!809$ & $4,\!032,\!625$ & $4,\!149,\!784$ \\
		\hline
	\end{tabular}
\end{center}

\normalsize

\bibliography{stickknots-special,stickknots}

\end{document}